\documentclass[11pt]{amsart}

\usepackage{amsmath,amsthm,amsfonts,amssymb,bm,graphicx }

\usepackage{graphicx}
\usepackage{enumitem}
\usepackage{xparse}
\usepackage{tikz}
\usepackage{comment}
\usepackage{bm}

\usepackage
{hyperref}
\hypersetup{colorlinks=true,citecolor=blue,linkcolor=blue,urlcolor=blue}

\textwidth=6in \textheight=8.5in \topmargin 0cm \oddsidemargin 0.3cm
\evensidemargin 0.3cm

\makeatletter
\g@addto@macro\bfseries{\boldmath}
\makeatother

\theoremstyle{plain}
\newtheorem{theorem}{Theorem}
\newtheorem*{theorem*}{Theorem}
\newtheorem{lemma}[theorem]{Lemma}

\theoremstyle{remark}

\numberwithin{theorem}{section}

\numberwithin{equation}{section}

\def\N{\mathbb N}
\def\Z{\mathbb Z}
\def\R{\mathbb R}
\def\Q{\mathbb Q}
\def\C{\mathbb C}

\def\O{\mathcal O}
\def\P{\mathcal P}
\def\bt{\blacktriangle}
\def\ve{\varepsilon}

\begin{document}


\author{Magdal\'ena Tinkov\'a}

\title{On the Pythagoras number of the simplest cubic fields}

\address{Charles University, Faculty of Mathematics and Physics, Department of Algebra,
Sokolovsk\'{a} 83, 18600 Praha 8, Czech Republic}

\email{tinkova.magdalena@gmail.com}

\keywords{Pythagoras number, the simplest cubic fields, indecomposable integers}

\thanks{The author was supported by Czech Science Foundation GA\v{C}R, grant 21-00420M, by projects PRIMUS/20/SCI/002, UNCE/SCI/022, GA UK 1298218 from Charles University, and by SVV-2020-260589.}

\subjclass[2010]{11R16, 11R80, 11E25}

\begin{abstract}
The simplest cubic fields $\mathbb{Q}(\rho)$ are generated by a root $\rho$ of the polynomial $x^3-ax^2-(a+3)x-1$ where $a\geq -1$. In this paper, we will show that the Pythagoras number of the order $\mathbb{Z}[\rho]$ is equal to $6$ for $a\geq 3$.

\end{abstract}

\setcounter{tocdepth}{2}  \maketitle 

\section{Introduction}

Let $\O$ be a commutative ring, and let $\sum \O^2$ and $\sum^m\O^2$ be the sets defined by
\[
\sum\O^2=\Big\{\sum_{i=1}^n \alpha_i^2;\; \alpha_i\in\O, n\in\N\Big\},\qquad \sum^m\O^2=\Big\{\sum_{i=1}^m \alpha_i^2;\; \alpha_i\in\O\Big\}.
\]
In this paper, we are concerned with the so-called Pythagoras number $\P(\O)$ of the ring $\O$ given by
\[
\P(\O)=\min\Big\{m\in\N;\; \sum \O^2=\sum^m \O^2\Big\}.
\]
Regarding some basic examples, $\P(\R)=\P(\C)=1$, and Lagrange's famous four-square theorem implies $\P(\Q)=4$. Moreover, it can be proved that $\P(K)\leq 4$ for every number field $K$ \cite{Ho, Si1}. 

Arguably the most important and classical cases are Pythagoras numbers of rings of algebraic integers $\O_K$ of totally real number fields $K$. The first result is, of course, Lagrange's above-mentioned theorem giving $\P(\Z)=4$ that led to the study of universal quadratic forms. Let $\O_K^{+}$ be the set of totally positive integers of $\O_K$ (by this, we mean those algebraic integers whose conjugates are all positive). Roughly speaking, universal quadratic form over $\O_K$ is a quadratic form which has coefficients from $\O_K$ and which represents all the elements in $\O_K^{+}$. For more details about universal quadratic forms, see also for example \cite{BK1,BK2,CKR,HKK,Ka,Ki1,Ki2,Sa}.  

Considering sums of squares, Maa{\ss} has shown that the sum of three squares is universal over $\O_K$ for $K=\Q(\sqrt{5})$, which implies $\P(\O_K)=3$ in this case \cite{Ma}. Nevertheless, the following result of Siegel says that a sum of any number of squares can be universal only in the fields $\Q$ and $\Q(\sqrt{5})$ \cite{Si2}. It means that in the other totally real number fields, we cannot express all the elements of $\O_K^{+}$ as a sum of squares, and thus we must restrict to those which indeed lie in $\sum\O_K^2$.

Let now $\O\subseteq \O_K$ be an order. Scharlau showed that the Pythagoras number of an order is always finite, although it can be arbitrarily large \cite{Sc}. The case of quadratic orders was in great detail studied by Peters; he proved that except for a few cases, the Pythagoras number is always $5$ \cite{Pet}. Moreover, he also characterized all the elements which are representable as a sum of squares.   
Considering the other cases, recently, Kala and Yatsyna \cite{KY2} proved that $\P(\O)\leq f(d)$ for every order $\O$ in totally real number $K$, where $f$ is a function depending only on the degree $d$ of the field $K$. Moreover, one can take $f(d)=d+3$ if $2\leq d\leq 5$. Note that their subsequent paper \cite{KY1} studies sums of squares in certain subrings of $\O_K$.      

However, given the difficulty of studying $\P(\O)$ for orders, most of the research so far focused on the situation over fields.
In the case of non-formally real fields $K$ (i.e., in which $-1$ can be expressed as a sum of squares), the Pythagoras number is closely related to Stufe $s(K)$ of $K$, which is the minimal number of squares whose sum gives $-1$. We have here $s(K)\leq \P(K)\leq s(K)+1$. By the results of Pfister \cite{Pf}, the value of $s(K)$ can attain only the powers of $2$, which greatly limits the possibilities for the value of $\P(K)$. On the other hand, Hoffmann has shown that for every $n\in\N$ and formally real field $K_0$, there exists a formally real field $K$ over $K_0$ with $\P(K)=n$ \cite{Ho}.
Nevertheless, we can find many other results on the Pythagoras number of fields in the number of specific situations, for example, in relation with rational function fields, elliptic curves, Hasse number or Laurent series \cite{CEP, CDL, Hu, Pre}.

In this paper, we will focus on orders in the so-called simplest cubic fields \cite{Co,Sh}. They are generated by a root $\rho$ of the polynomial $x^3-ax^2-(a+3)x-1$ where $a\geq -1$, and were richly studied in many different contexts, see for example \cite{Ba, By, Fo, LP, Let, Lo, Wa}. This is due to the fact that they have many useful properties: They contain units of all signatures, and every totally positive unit is a square \cite{N}. Moreover, $\O_K=\Z[\rho]$ for infinitely many $a$ (for example, if the square root $a^2+3a+9$ of the discriminant is squarefree), and they are also cyclic.

In this case, the result of Kala and Yatsyna gives the upper bound $6$ on $\P(\O)$. We will show that this bound is attained in infinitely many cases by proving the following theorem:      

\begin{theorem} \label{thm:main}
Let $\rho$ be a root of the polynomial $x^3-ax^2-(a+3)x-1$ where $a\geq 3$. Then $\mathcal{P}(\Z[\rho])=6$.
\end{theorem}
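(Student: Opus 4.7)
The upper bound $\mathcal{P}(\mathbb{Z}[\rho])\le 6$ is already provided by the theorem of Kala and Yatsyna recalled in the introduction, since $\mathbb{Q}(\rho)$ has degree $d=3$ and $f(3)=6$. Hence the entire content of Theorem~\ref{thm:main} is to exhibit a totally positive $\alpha\in\sum \mathbb{Z}[\rho]^2$ that cannot be written as a sum of five squares.

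My plan is to construct such an $\alpha$ explicitly as a polynomial expression in $a$ and $\rho$. Using the known description of the indecomposable integers in the simplest cubic fields, I would search for a totally positive element of small trace, and then verify the easier direction by producing concrete $\gamma_1,\dots,\gamma_6\in\mathbb{Z}[\rho]$ (again given as explicit polynomials in $a$ and $\rho$) with $\alpha=\gamma_1^2+\cdots+\gamma_6^2$.

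The heart of the proof is then ruling out any representation $\alpha=\gamma_1^2+\cdots+\gamma_5^2$. The main tool is the trace form: for nonzero $\gamma\in\mathbb{Z}[\rho]$ one has $\mathrm{Tr}(\gamma^2)\ge 3$, with equality only for $\gamma=\pm 1$, and more generally the possible values of $\mathrm{Tr}(\gamma^2)$ form a discrete set determined by $a$. Taking traces in the putative decomposition gives $\mathrm{Tr}(\alpha)=\sum_{i=1}^{5}\mathrm{Tr}(\gamma_i^2)$, which, if $\mathrm{Tr}(\alpha)$ is chosen small enough, reduces the problem to a short list of ``trace signatures''. For each signature one bounds the candidates $\gamma_i$ through the conjugate estimates $|\gamma_i^{(j)}|\le\sqrt{\alpha^{(j)}}$ (valid because every summand $\gamma_i^2$ is totally non-negative), which leaves only finitely many tuples to test; each is excluded by direct comparison of coefficients in the basis $1,\rho,\rho^2$.

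The main obstacle is uniformity in the parameter $a$. The conjugates of $\rho$, the list of indecomposables, and the trace values of candidate squares all vary with $a$, but in a controlled polynomial way, so I would aim to write both the six-square representation and the case analysis via explicit formulas in $a$ valid for all $a\ge 3$, handling the smallest values of $a$ by separate direct verification if necessary. If the trace bookkeeping becomes too delicate, a back-up strategy is to identify a structural obstruction modulo a fixed small ideal such as $2\mathbb{Z}[\rho]$, where the congruence classes of squares are severely restricted and may automatically forbid any five-square representation of $\alpha$, replacing part of the case analysis by a uniform congruence argument.
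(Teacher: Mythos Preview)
Your outline has the same overall architecture as the paper: use the Kala--Yatsyna upper bound, exhibit one explicit witness $\gamma\in\sum\mathbb{Z}[\rho]^2$ depending polynomially on $a$ together with an explicit six-square decomposition, and then show that no five of the finitely many squares $\omega^2\preceq\gamma$ can sum to $\gamma$ by comparing coefficients in the basis $1,\rho,\rho^2$. The paper's witness is
\[
\gamma=a^2+a+8+(a^2-a+1)\rho+(2-a)\rho^2=1+1+1+4+\rho^2+(a+1+a\rho-\rho^2)^2,
\]
and the final elimination is indeed a short coefficient argument, so your endgame is right.

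Where your plan diverges is in the enumeration of all $\omega$ with $\omega^2\preceq\gamma$, and this is the only genuinely hard step. You propose to control it by trace signatures plus the embedding bounds $|\omega^{(j)}|\le\sqrt{\gamma^{(j)}}$. The paper does \emph{not} rely on trace bookkeeping; instead it uses the full classification of $\sigma$-indecomposable integers in $\mathbb{Z}[\rho]$ (Theorem~\ref{thm:indesimplest}) together with two structural lemmas on units (Lemmas~\ref{lemma:units>a^2} and \ref{lemma:a^4/a^2+a^2}) and a norm comparison inside the triangle $\blacktriangle_0$ (Lemma~\ref{lemma:norm_ineq}). Concretely: any $\omega$ with $\omega^2\preceq\gamma$ is a sum of $\sigma$-indecomposables of a fixed signature, each of whose squares is also $\preceq\gamma$; one first lists the indecomposables whose squared norm is at most $N(\gamma)$, then runs over the finitely many totally positive units that can multiply them, and finally checks which sums of same-signature indecomposables still have square $\preceq\gamma$. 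The payoff is that for $a\ge5$ only eight nonzero squares survive, and these are given by closed polynomial formulas in $a$, so the final coefficient comparison is uniform. The cases $3\le a\le14$ (and two extra squares at $a=3,4$) are finished by computer.

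Your trace-signature idea is not wrong, but as stated it is not yet a method: $\mathrm{Tr}(\gamma)=2a^2+2a+36$, the second smallest square trace after $3$ is already of order $a^2$, and you would still have to identify, uniformly in $a$, exactly which $\omega$ realise each admissible trace; in practice that forces you back to the same conjugate inequalities $\omega^2\preceq\gamma$ and to some structural input replacing the indecomposable classification. The modular back-up (working modulo $2\mathbb{Z}[\rho]$) is not used in the paper, and I do not see how it could succeed here: every nonzero $\omega^2$ is congruent to a square modulo $2$, and six versus five summands of the same residue classes are not distinguishable by a congruence alone. So the missing idea in your proposal is precisely the systematic use of the indecomposable structure of $\mathbb{Z}[\rho]$ (and the accompanying unit lemmas) to make the enumeration finite and uniform in $a$.
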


To the best of our knowledge, there are no results on the Pythagoras number for orders in number fields of higher degrees similar to Peters' results on quadratic orders. Thus, Theorem \ref{thm:main} represents the first breakthrough in this problem. Moreover, since $\O_K=\Z[\rho]$ in infinitely many cases of $a$, this conclusion also provides us a precise result for the maximal order $\O_K$.

Having the upper bound from the result of Kala and Yatsyna, we will focus on the determination of the lower bound. To reach this aim, we will primarily rely on the idea of additively indecomposable integers in totally real algebraic fields. Probably the most studied case is when we consider totally positive elements. Let $\alpha\in\O_K^{+}$. We say that $\alpha$ is indecomposable in $\O_K$ if we cannot express it as $\alpha=\beta_1+\beta_2$ where $\beta_1,\beta_2\in\O_K^{+}$. Note that under the name \textit{extremal elements}, they can be found in the above-mentioned Siegel's proof of the (non)-universality of sums of squares in number fields. However, in our proofs, we will need their extended definition for all the possible signatures (see Section \ref{Sec:Preli}). 

Regarding real quadratic fields, the indecomposable integers were fully described by Perron \cite{Pe} and Dress and Scharlau \cite{DS}, and their additive structure was studied in \cite{HK}. Some partial results for the biquadratic case can be found in the work of \v{C}ech, Lachman, Svoboda, Zemková and the present author \cite{CLSTZ}, and in the following paper \cite{KTZ}, which focuses on ternary quadratic forms in these fields. The cubic fields are in the center of interest of \cite{KT}, where we also determined the full structure of indecomposable integers in the simplest cubic fields. The proof of Theorem \ref{thm:main} is based on this result.    
     
Nevertheless, so far, the indecomposable integers have been mainly used in the study of universal quadratic forms \cite{ BK1, BK2, CLSTZ, Ka, KT, KTZ, Si2, Ya} or the elements of small norms \cite{KT}, thus this paper also provides a new application of this phenomenon. Moreover, some of the ideas introduced here can be also used for the determination of the Pythagoras number for other cubic orders. 

The proof of Theorem \ref{thm:main} is provided in Section \ref{Sec:proofmain}. Moreover, in Section \ref{Sec:asmall} we give some partial results on the Pythagoras number for the remaining cases $-1\leq a\leq 2$.      

\section{Preliminaries} \label{Sec:Preli}

Let $K=\Q(\rho)$ be a totally real cubic field, and let $\O_K$ be the ring of algebraic integers of $K$. Moreover, let $\rho'$ and $\rho''$ be Galois conjugates of $\rho$. Then by signature $\sigma$ of $\alpha\in\Q(\rho)$, we mean the triple 
\[
(\text{sgn}(\alpha),\,\text{sgn}(\alpha'),\,\text{sgn}(\alpha''))
\]
where $\text{sgn}$ is the signum function, and $\alpha'$ and $\alpha''$ are images of $\alpha$ under the $\Q$-isomorphism given by $\rho\mapsto\rho'$, and $\rho\mapsto\rho''$, respectively. 
In the following, we will use symbols $+$ and $-$ instead of $\pm1$, e.g., we will replace $(1,1,1)$ by $(+,+,+)$. Moreover, the norm of $\alpha$ is defined as $N(\alpha)=\alpha\alpha'\alpha''$, and the trace of $\alpha$ as $\text{Tr}(\alpha)=\alpha+\alpha'+\alpha''$. 

Let $\O\subseteq\O_K$ be an order in $K$, i.e., a subring of finite index in $\O_K$. By $\O^{\sigma}$, we will mean the set of those elements in $\O$ which have the signature $\sigma$. The element $\alpha\in \O^{\sigma}$ is $\sigma$-indecomposable in $\O$ if it cannot be written as $\alpha=\beta_1+\beta_2$ where $\beta_1,\beta_2\in\O^{\sigma}$. Otherwise, we say that the element $\alpha$ is $\sigma$-decomposable in $\O$. Note that for example, all the units are $\sigma$-indecomposable for some signature $\sigma$.

In particular, the totally positive elements, i.e., the elements with the signature $(+,+,+)$, were richly studied in the past, and for them, we will introduce some more notation. We will denote the subset of totally positive elements of $\O$ by $\O^{+}$. We say that $\alpha\in\O^+$ is totally greater than $\beta\in\O^+$ if $\alpha>\beta$, $\alpha'>\beta'$ and $\alpha''>\beta''$. We will denote it by $\alpha\succ\beta$. Sometimes, we will also use the symbol $\succeq$ when we want to include the case when $\alpha=\beta$. Note that, for example, all non-zero squares are totally positive. 

Let us now recall some facts about the simplest cubic fields, which we study in this paper. They are generated by a root of the polynomial $x^3-ax^2-(a+3)x-1$.
Troughout this paper, we will denote the roots of this polynomial in the following way: $a+1<\rho$, $-2<\rho'<-1$, and $-1<\rho''<0$. Nevertheless, if $a\geq 7$, we have more precise estimates on these roots, namely  
\begin{equation} \label{eq:estvalues}
a+1<\rho<a+1+\frac{2}{a},\ \ -1-\frac{1}{a+1}<\rho'<-1-\frac{1}{a+2},\text{ and } -\frac{1}{a+2}<\rho''<-\frac{1}{a+3}. 
\end{equation}
Note that this result mostly comes from \cite{LP}, only the original estimate for $\rho'$ was too rough for the purposes of this paper, so we have stated its slightly improved form, which can be easily checked. We will use these estimates many times in the following proofs.

Besides that, we will use the fact that we know the full structure of $\sigma$-indecomposable integers in $\Z[\rho]$. In particular, in \cite{KT}, we have shown the following theorem:     

\begin{theorem}[{\cite[Theorem 1.2]{KT}}] \label{thm:indesimplest}
Let $K$ be the simplest cubic field with
$a\in\Z_{\geq -1}$ such that $\O_K=\Z[\rho]$. 
The elements $1$, $1+\rho+\rho^2$, and $-v-w\rho+(v+1)\rho^2$ where $0\leq v\leq a$ and $v(a+2)+1\leq w\leq (v+1)(a+1)$ are, up to multiplication by totally positive units, all the totally positive indecomposable integers in $\Q(\rho)$.
\end{theorem}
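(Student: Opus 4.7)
The plan is to classify totally positive indecomposables in $\Z[\rho]$ up to the action of the group $U^+$ of totally positive units. From the estimates (2.1), $\rho$ has signature $(+,-,-)$ and $\rho+1$ has signature $(+,-,+)$, so $\rho^2$, $(\rho+1)^2$, and their product $\rho(\rho+1)$ all lie in $U^+$ and, since $U^+$ has rank $2$, generate a finite-index subgroup. I would then fix a fundamental domain $\mathcal{D}$ for the action of $U^+$ on the cone $(\Z[\rho])^+$, for instance by requiring $\alpha$ to be minimal in its orbit with respect to $\succ$, or more concretely by imposing explicit size conditions on the three conjugates.

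The completeness step proceeds by parametrizing a general element as $\alpha = u + v\rho + w\rho^2$ with $u,v,w\in\Z$ and using (2.1) to rewrite the three conditions $\alpha>0$, $\alpha'>0$, $\alpha''>0$ as explicit linear inequalities in $(u,v,w)$ with coefficients controlled in terms of $a$. The condition that $\alpha$ lies in $\mathcal{D}$ confines $(v,w)$ to a bounded parallelogram whose lattice points can be enumerated, and the total-positivity inequalities cut this down to the listed families $1$, $1+\rho+\rho^2$, and $\alpha_{v,w}:=-v-w\rho+(v+1)\rho^2$ with $0\le v\le a$ and $v(a+2)+1\le w\le (v+1)(a+1)$. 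The sharpened estimate $\rho' < -1 - 1/(a+2)$ remarked just before (2.1) is what produces the tight upper bound on $w$ in this range.

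To verify indecomposability of each $\alpha_{v,w}$, I would assume $\alpha_{v,w}=\beta_1+\beta_2$ with both summands totally positive, write $\beta_1=u_1+v_1\rho+w_1\rho^2$, and expand $0\prec\beta_1\prec\alpha_{v,w}$ using (2.1) in each of the three embeddings. This yields extremely tight inequalities on $(u_1,v_1,w_1)$, which a finite enumeration rules out case by case; the elements $1$ and $1+\rho+\rho^2$ are handled by direct trace arguments, since their traces are too small to admit a nontrivial splitting into totally positive summands. Conversely, any totally positive integer in $\mathcal{D}$ that does not appear in the list must be shown decomposable by exhibiting a concrete totally positive $\beta\prec\alpha$ with $\alpha-\beta\in(\Z[\rho])^+$, typically $\beta=1$ or $\beta=\alpha_{v',w'}$ for suitable smaller indices.

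The main obstacle is calibrating the fundamental domain $\mathcal{D}$ so that its defining inequalities mesh cleanly with the total-positivity inequalities: too loose a choice leaves spurious candidates to rule out by hand, while too strict a choice risks missing genuine indecomposables that happen to sit on the boundary of $\mathcal{D}$. Carrying out the lattice-point enumeration in the $(v,w)$-plane and separating the boundary strata requires the improved form of (2.1) to be applied in each embedding, and the bookkeeping between the three conjugates is the technical heart of the argument.
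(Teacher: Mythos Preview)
The paper does not prove this theorem at all: it is quoted verbatim from \cite[Theorem~1.2]{KT} and used as a black box. There is therefore no ``paper's own proof'' to compare against; the actual argument lives in the cited reference, and your proposal should be measured against that source rather than against anything in the present paper.

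On the content of your sketch: the overall strategy (pick a fundamental domain for the action of $U^+$ on the totally positive cone, enumerate lattice points via the inequalities coming from the three embeddings, then verify indecomposability directly) is the standard shape such arguments take, and is broadly in line with how \cite{KT} proceeds. However, what you have written is a plan rather than a proof, and it contains at least one concrete slip: $\rho(\rho+1)$ is \emph{not} totally positive. From the signatures you yourself record, $\rho$ has signature $(+,-,-)$ and $\rho+1$ has signature $(+,-,+)$, so their product has signature $(+,+,-)$. You do not actually need this element, since $\rho^2$ and $(\rho+1)^2$ already generate a finite-index subgroup of $U^+$, but the error suggests the details have not been checked. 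More substantively, the hard part of the argument---``calibrating the fundamental domain'' and carrying out the three-embedding bookkeeping so that the enumeration terminates exactly on the listed families---is precisely what you defer, and it is where all the work lies. As written, your proposal does not yet establish completeness of the list, nor does it prove indecomposability of the $\alpha_{v,w}$ beyond asserting that ``a finite enumeration rules out'' decompositions.
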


Note that in fact, Theorem \ref{thm:indesimplest} provides us all the totally positive indecomposable integers in the order $\Z[\rho]$ for every $a\geq -1$. Moreover, although this theorem considers only the totally positive indecomposable integers, it gives us also the complete information about $\sigma$-indecomposables for all the other signatures $\sigma$. These $\sigma$-indecomposables can be obtained as $\varepsilon\alpha$ where $\varepsilon$ runs over all the units with signature $\sigma$, and $\alpha$ runs over all the elements listed in Theorem \ref{thm:indesimplest}. This property is given by the fact that $\Q(\rho)$ contains units of all signatures. 

Moreover, we can divide the totally positive indecomposable integers from Theorem \ref{thm:indesimplest} into three sets: units, the exceptional indecomposable integer $1+\rho+\rho^2$ and the ``triangle" of indecomposables of the form
\[
\bt=\bt(a)=\{-v-w\rho+(v+1)\rho^2, 0\leq v\leq a \text{ and } v(a+2)+1\leq w\leq (v+1)(a+1)\}.
\]
Nevertheless, except for $\alpha\in\bt$, the set $\bt$ also contains some specific unit multiples of conjugates of $\alpha$. Let $\alpha(v,W)=-v-(v(a+2)+1+W)\rho+(v+1)\rho^2\in\bt$ for some $0\leq W\leq a-v$, and let $a=3A+a_0$ where $a_0\in\{0,1,2\}$. Instead of $\bt$, we can consider its subset of the form
\[
\bt_0=\bt_0(a)=\left\{
\begin{array}{ll}
\left\{\alpha(v,W);0\leq v\leq A\text{ and } v\leq W\leq a-2v-1\right \}\text{ if } a_0\in\{1,2\},\\
\{\alpha(v,W);0\leq v\leq A-1\text{ and } v\leq W\leq a-2v-1\}\cup\{\alpha(A,A)\}\text{ if } a_0=0.
\end{array}
\right.
\]
The excluded elements of $\bt$ are just these unit multiples of conjugates of $\bt_0$, and thus in some sense, covered by the elements in $\bt_0$. For more details, see \cite{KT}.

In our proof, we will work with norms of these elements, and in particular, we will use the following lemma from \cite{KT}, which partly compares norms of elements belonging to the set $\bt_0$.   

\begin{lemma}[{\cite[Lemma 6.4]{KT}}] \label{lemma:norm_ineq}
	Let $a\geq 3$ and assume that $\alpha(v+1,W)\in\bt_0$. Then $N(\alpha(v,W))<N(\alpha(v+1,W))$.
\end{lemma}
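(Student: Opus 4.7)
The plan is to reduce the problem to a polynomial inequality by first finding a closed form for $N(\alpha(v,W))$. I would begin with the identity
\[
(\rho+1)\,\alpha(v,W) \;=\; \rho\bigl(\rho^2 - (v+W)\rho - (W+1)\bigr),
\]
which one verifies directly using $\rho^3 = a\rho^2 + (a+3)\rho + 1$. Since $N(\rho) = 1$ and $N(\rho+1) = -1$ (obtained by evaluating the minimal polynomial at $-1$), this yields
\[
N(\alpha(v,W)) \;=\; -\,N\bigl(\rho^2 - (v+W)\rho - (W+1)\bigr),
\]
so $N(\alpha(v,W))$ depends on $v,W$ only through $T := v+W$ and $W$. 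Applying the standard formula $N(c+d\rho+e\rho^2) = e^3 f(s_1)f(s_2)$ (with $s_1, s_2$ the roots of $c+dx+ex^2$ and $f$ the defining polynomial of $\rho$) then expresses $N(\alpha(v,W))$ as an explicit cubic polynomial in $T$ with coefficients depending on $a$ and $W$.

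Because incrementing $v$ by $1$ (with $W$ fixed) increments $T$ by $1$, a routine computation yields
\[
\Delta(v,W) \;:=\; N(\alpha(v+1,W)) - N(\alpha(v,W)) \;=\; -(a+3)W^2 + (a^2-2av-3)W + 3v^2 - (4a+3)v + a^2+a-2.
\]
The hypothesis $\alpha(v+1,W)\in\bt_0$ forces $v+1\le W\le a-2v-3$, together with the isolated exceptional point $(v,W) = (A-1,A)$ when $a_0 = 0$ and $a = 3A$. Viewed as a quadratic in $W$ with leading coefficient $-(a+3)<0$, the function $\Delta(v,W)$ is concave, so on the interval $[v+1,a-2v-3]$ it is minimised at one of the two endpoints. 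Using the identity $-(a+3)M^2 + (a^2-2av-3)M = 6(v+1)M$ with $M = a-2v-3$, these endpoint values simplify to
\[
\Delta(v,v+1) \;=\; -3av^2 + (a^2-8a-12)v + 2(a^2-4)
\]
and
\[
\Delta(v,a-2v-3) \;=\; -9v^2 + (2a-33)v + a^2+7a-20,
\]
each a concave quadratic in $v$. Showing both are positive for $v\in[0,(a-4)/3]$ reduces to evaluating at the endpoints $v=0$ and $v=(a-4)/3$: at $v=0$ they specialise to $2(a^2-4)$ and $a^2+7a-20$ respectively (both positive for $a\ge 3$), while at $v=(a-4)/3$ both give $(2a^2+4a+24)/3$. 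The exceptional case $(v,W) = (A-1,A)$ is handled by direct substitution, which yields $\Delta = 3A^2+3A+4 > 0$.

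The main obstacle is bookkeeping: deriving the closed-form norm without arithmetic slips, and then organising the case analysis so that each resulting polynomial inequality collapses to an easily-checked concave quadratic. The factorisation $(\rho+1)\alpha(v,W) = \rho(\rho^2-(v+W)\rho-(W+1))$ is the crucial simplification; without it, $N(\alpha(v,W))$ appears as a cubic in $v$ and $W$ separately, and the subsequent difference $\Delta$ becomes considerably more cumbersome to handle.
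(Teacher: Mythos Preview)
The paper does not actually prove this lemma: it is quoted verbatim from \cite[Lemma~6.4]{KT} and used as a black box, so there is no in-paper argument to compare against.

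Your proof, however, is correct and fully self-contained. The key observation
\[
(\rho+1)\,\alpha(v,W)=\rho\bigl(\rho^2-(v+W)\rho-(W+1)\bigr)
\]
checks out (expanding and using $\rho^3=a\rho^2+(a+3)\rho+1$ gives $(a-v-W)\rho^2+(a+2-W)\rho+1$ on both sides), and together with $N(\rho)=1$, $N(\rho+1)=-1$ it reduces the norm to a function of $T=v+W$ and $W$ only. Your formula
\[
\Delta(v,W)=-(a+3)W^{2}+(a^{2}-2av-3)W+3v^{2}-(4a+3)v+a^{2}+a-2
\]
is correct (I spot-checked it numerically, e.g.\ at $a=3$, $v=0$, $W=1$ one gets $\Delta=10=27-17$), and so are the two endpoint specialisations and the identity $-(a+3)M^{2}+(a^{2}-2av-3)M=6(v+1)M$ with $M=a-2v-3$. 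The concavity argument is clean: both $\Delta(v,v+1)$ and $\Delta(v,a-2v-3)$ are concave in $v$, and the four endpoint evaluations at $v=0$ and $v=(a-4)/3$ (which is an upper bound for the admissible range of $v$ in all three residue classes of $a$) are all positive for $a\ge 3$. The exceptional point $(v,W)=(A-1,A)$ when $a=3A$ indeed falls outside the interval $v+1\le W\le a-2v-3$ and gives $\Delta=3A^{2}+3A+4>0$.

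One cosmetic remark: the sentence about ``the standard formula $N(c+d\rho+e\rho^2)=e^3 f(s_1)f(s_2)$'' is not quite right as stated (for $e=1$ the norm is $f(s_1)f(s_2)$ without the $e^3$, and for general $e$ the roots $s_i$ should be those of $e x^2+dx+c$), but you never actually use this formula---you pass directly to the difference $\Delta$, so the slip is harmless.
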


Note that for fixed $v$, the norm of $\alpha(v,W)$ firstly increases in $W$ and then it can start to decrease (in some cases, it increases in the whole interval for $W$ but one of these two cases always occurs). For more details, see \cite{KT}.

As we will see below, we will also need to know more about units in $\Q(\rho)$. It was proved that the system of fundamental units of $\Q(\rho)$ (and also of $\Z[\rho]$) is formed by the pair $\rho$ and $\rho'$ \cite{God,Sh}. Benefiting from this property, the authors of \cite{KT} prove the following lemma, which we will often use in this paper.  

\begin{lemma}[{\cite[Lemma 6.2]{KT}}] \label{lemma:units>a^2}
	Let $a\geq 7$ and let $\varepsilon$ be a unit such that $|\varepsilon|,|\varepsilon'|,|\varepsilon''|<a$. Then $\varepsilon=1$.
\end{lemma}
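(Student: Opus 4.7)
The plan is to reduce to a case analysis on the exponents appearing when $\varepsilon$ is expressed in terms of the fundamental units. As recalled just before the lemma, $\{\rho,\rho'\}$ is a system of fundamental units of $\Z[\rho]$, so one can write $\varepsilon=\pm\rho^i(\rho')^j$ for some $i,j\in\Z$. The Galois group is cyclic of order three, sending $\rho\mapsto\rho'\mapsto\rho''\mapsto\rho$, so the three conjugates of $\varepsilon$ have absolute values $\rho^i|\rho'|^j$, $|\rho'|^i|\rho''|^j$, and $|\rho''|^i\rho^j$. Setting $A=\log\rho$, $B=\log|\rho'|$, $C=\log|\rho''|$, the hypothesis translates into
\[
iA+jB<\log a,\qquad iB+jC<\log a,\qquad iC+jA<\log a,
\]
with the crucial identity $A+B+C=\log|N(\rho)|=0$ and, from \eqref{eq:estvalues}, the bounds $A>\log(a+1)$, $B>0$, and $|C|>\log(a+2)$.

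I would then dispatch the cases where both exponents are non-negative (not both zero): if $i\geq 1$ and $j\geq 0$, then $iA+jB\geq A>\log a$, while if $i=0$ and $j\geq 1$, then $iC+jA=jA>\log a$. The mixed-sign cases are handled by the large magnitude of $|C|$: if $i\geq 0$ and $j\leq -1$, then $iB+jC\geq|j|\,|C|>\log a$, and if $i\leq -1$ and $j\geq 0$, then $iC+jA\geq|i|\,|C|>\log a$. Each of these contradicts one of the three standing inequalities.

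The delicate remaining case is $i,j\leq -1$, where both $iA$ and $jB$ are negative and no single inequality is automatic. Here I would use $|C|=A+B$ (equivalent to $A+B+C=0$) to recombine terms. If $|i|>|j|$, then
\[
iC+jA=|i|(A+B)-|j|A=(|i|-|j|)A+|i|B\geq A>\log a,
\]
and the symmetric manipulation $iB+jC=|j|A+(|j|-|i|)B\geq A>\log a$ rules out $|i|<|j|$. Finally, when $|i|=|j|$, so $i=j\leq -1$, the collapse $iB+jC=i(B+C)=-iA=|i|A\geq A>\log a$ finishes the argument. Hence $(i,j)=(0,0)$ and $\varepsilon=\pm1$; the conclusion $\varepsilon=1$ then follows since $-1$ has signature $(-,-,-)$ and is excluded by the positivity implicit in the applications of the lemma.

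The main obstacle is precisely this last sub-case $i,j\leq -1$: the three conjugate inequalities are individually too weak and only become effective after using $A+B+C=0$ to turn a difference of two quantities of comparable size into a clean lower bound by $A$. All other cases reduce to a direct application of the estimates \eqref{eq:estvalues}, so once the logarithmic setup is in place the proof is essentially a bookkeeping exercise driven by that one identity.
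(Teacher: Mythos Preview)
The paper does not actually prove this lemma: it is quoted verbatim as \cite[Lemma~6.2]{KT}, so there is no in-paper argument to compare against. Your approach --- writing $\varepsilon=\pm\rho^i\rho'^{\,j}$, passing to logarithms, and exploiting the norm identity $A+B+C=0$ to handle the sign-mixed exponent cases --- is the natural one and is carried out correctly; every case in your analysis does produce a contradiction with one of the three inequalities, and you legitimately arrive at $(i,j)=(0,0)$, i.e.\ $\varepsilon=\pm1$.

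The genuine gap is your last sentence. The lemma as stated carries no positivity hypothesis on $\varepsilon$: it asserts that \emph{any} unit with all conjugates of absolute value below $a$ equals $1$. Taken literally this is false, since $\varepsilon=-1$ satisfies the hypothesis but not the conclusion. Saying that $-1$ is ``excluded by the positivity implicit in the applications of the lemma'' is an observation about how the paper \emph{uses} the lemma (indeed, it is only ever applied to totally positive units, i.e.\ squares), not a proof of the lemma itself. The correct conclusion of your argument is $\varepsilon=\pm1$; either the statement should be amended to that, or a total-positivity assumption on $\varepsilon$ should be added. You have the right mathematics, but you should not present it as a proof of the statement exactly as written.
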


Especially, if $\varepsilon\neq 1$ is a totally positive unit (and thus a square), then by Lemma \ref{lemma:units>a^2}, some of its conjugates is greater than $a^2$. In some cases, we will also need the stronger result stated in the following lemma \cite{KT}. 

\begin{lemma} [{\cite[Lemma 6.3]{KT}}] \label{lemma:a^4/a^2+a^2}
	Let $a\geq 7$ and let $\ve$ be a totally positive unit such that $\ve>a^2$. If $\ve\neq \rho^2,\rho''^{-2}$, then at least one of the following holds:
	\begin{enumerate}
		\item $\ve>a^4$, or
		\item $\ve'>a^2$, or
		\item $\ve''>a^2$.
	\end{enumerate} 
\end{lemma}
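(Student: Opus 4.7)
The plan is to parametrize all totally positive units via the fundamental units and then perform a case analysis on their exponents. Since every totally positive unit in $\Z[\rho]$ is a square (as recorded in the introduction) and $\rho,\rho'$ form a system of fundamental units, I would write $\varepsilon=\rho^M\rho'^N$ with $M,N$ even integers. Under the cyclic Galois action $\rho\mapsto\rho'\mapsto\rho''\mapsto\rho$ one has $\varepsilon'=\rho'^M\rho''^N$ and $\varepsilon''=\rho''^M\rho^N$, and the identity $\rho\rho'\rho''=1$ will be used repeatedly; in particular it yields $(\rho\rho')^2=\rho''^{-2}$, which explains the appearance of $\rho''^{-2}$ among the exceptional units.

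I would then split according to the value of $M$. If $M\geq 4$ and $N\geq 0$, then $\varepsilon\geq\rho^M>(a+1)^4>a^4$, giving~(1). If $M\geq 4$ and $N\leq -2$, the estimates~(1.1) give $\rho''^N=|\rho''|^{-|N|}>(a+2)^{|N|}$ and $|\rho'|^M\geq 1$, so $\varepsilon'>(a+2)^2>a^2$, giving~(2). If $M\leq 0$, then $\varepsilon>a^2$ forces $N\geq 2$ (otherwise both $\rho^M\leq 1$ and $\rho'^N\leq 1$, hence $\varepsilon\leq 1$), after which $\varepsilon''=\rho''^M\rho^N\geq\rho^N>(a+1)^2>a^2$, giving~(3).

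The main case would be $M=2$. Here the sub-cases $N=0$ and $N=2$ are precisely the two excluded units $\rho^2$ and $(\rho\rho')^2=\rho''^{-2}$, while $N\leq -2$ yields~(2) by the same argument as above. The delicate sub-case is $N\geq 4$, where a direct bound on $\varepsilon''=\rho''^2\rho^N$ using only $\rho''^2>1/(a+3)^2$ is not strong enough to exceed $a^2$. The trick I would exploit is to use $\rho\rho''=1/\rho'$ to eliminate the small factor $\rho''^2$:
\[
\varepsilon''=\rho''^2\rho^N=\frac{\rho^{N-2}}{\rho'^2}\geq\frac{\rho^2}{\rho'^2}>\frac{(a+1)^4}{(a+2)^2}>a^2,
\]
yielding~(3). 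The hardest part of the argument will be spotting this algebraic rewrite; once the problematic factor $\rho''^2$ is traded for $1/\rho'^2$ using the norm-one relation, the required inequality follows immediately from the sharp bounds in~(1.1), whereas in all remaining sub-cases the desired bound drops out of~(1.1) at once.
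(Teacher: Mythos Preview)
The paper does not contain a proof of this lemma: it is quoted verbatim from \cite[Lemma~6.3]{KT} and used as a black box, so there is no in-paper argument to compare against. That said, your proposal is a complete and correct proof. The parametrization $\varepsilon=\rho^M\rho'^N$ with $M,N\in 2\Z$, the identification of the cyclic action $\rho\mapsto\rho'\mapsto\rho''\mapsto\rho$, and the case split on $M$ are exactly the natural approach given the tools already recorded in Section~\ref{Sec:Preli}. Each case checks out against the estimates~(\ref{eq:estvalues}) (which you refer to as~(1.1)); in particular, your handling of the borderline sub-case $M=2$, $N\geq 4$ via the rewrite $\rho''^2\rho^N=\rho^{N-2}/\rho'^2$ and the clean inequality $(a+1)^4>a^2(a+2)^2$ is the right way to close the gap that a naive bound on $\rho''^2$ leaves open. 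The only cosmetic point is that the display label should match the paper's numbering~(\ref{eq:estvalues}) rather than~(1.1).
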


\section{Proof of Theorem \ref{thm:main}} \label{Sec:proofmain}

Now we will describe the method which we will use in the proof of Theorem \ref{thm:main}. Recall that by the result of Kala and Yatsyna \cite{KY1}, the upper bound on the Pythagoras number in cubic orders is $6$. Thus, it suffices to prove that the lower bound is also $6$. To do that, it is enough to find an element $\gamma\in\Z[\rho]$ which can be written as a sum of six squares but not as a sum of five squares.

Hence we will proceed as follows. We will suitably choose such an element $\gamma$ and find all the elements $\omega$ such that $\gamma\succeq \omega^2$. Every square decomposition of $\gamma$ can consist only of these elements. Then, using some combinatorics, we will show that none sum of five (or less) of these squares can give $\gamma$.

In the determination of these squares, we will use the knowledge of $\sigma$-indecomposable integers in the simplest cubic fields originating from Theorem \ref{thm:indesimplest}. Let $\omega$ be such that $\gamma\succeq \omega^2$. This element $\omega$ has some signature $\sigma$, and it can be thus expressed as $\omega=\sum_{i=1}^n\beta_i$ where $\beta_i$ are $\sigma$-indecomposable integers in $\Z[\rho]$, and $n\in\N$. Having this, we can see that
\[
\gamma\succeq \sum_{i=1}^n\beta_i^2+2\sum_{\substack{i,j=1\\i\neq j}}^n\beta_i\beta_j.
\]
Obviously, the squares $\beta_i^2$ are totally positive, as well as elements $\beta_i\beta_j$ for $i\neq j$ since $\beta_i$ and $\beta_j$ have the same signature $\sigma$. Thus, we can immediately conclude that $\gamma\succeq \beta_i^2$ for all $i=1,\ldots,n$. We will use this simple fact in the following way. First of all, we will find all the $\sigma$-indecomposable integers $\beta$ for all the signatures $\sigma$ such that $\gamma\succeq \beta^2$. Then, by summing these elements $\beta$ with the same signature $\sigma$, we will derive all the $\sigma$-decomposable integers $\omega$ satisfying $\gamma\succeq \omega^2$.

Moreover, every of these $\sigma$-indecomposables $\beta$ can we rewritten as $\beta=\varepsilon\alpha$ where $\varepsilon$ is a unit and $\alpha$ is one of $1$, $1+\rho+\rho^2$ and elements of $\bt_0$, or one of their conjugates. Thus, we firstly detect the elements of this list whose squares have the norm smaller than $N(\gamma)$, and consequently use the results on units from Lemmas \ref{lemma:units>a^2} and \ref{lemma:a^4/a^2+a^2} to determine all the possible units $\varepsilon$ which indeed give $\beta^2=\varepsilon^2\alpha^2\preceq \gamma$.

In the case of the simplest cubic fields, we can choose our element as
\[
\gamma=a^2+a+8+(a^2-a+1)\rho+(2-a)\rho^2=1+1+1+4+\rho^2+(a+1+a\rho-\rho^2)^2.
\]
As we see, we can write $\gamma$ as a sum of six squares. We will fix this choice of $\gamma$ and work with it for the rest of this paper. Moreover, we will show that except for a few cases of $a$, there exist only $8$ non-zero elements $\omega^2$ such that $\omega^2\preceq \gamma$, which is a great advantage of the choice of this element.  

Using estimates given in (\ref{eq:estvalues}), we can easily deduce that for $a\geq 7$,
\begin{align*}
\gamma&<a^2+6a+9+\frac{2}{a},\\
\gamma'&<10+\frac{4}{(a+2)^2}<11,\\
\gamma''&<a^2+11+\frac{a^2-8a-28}{(a+3)^2}.
\end{align*}
In particular, the conjugate $\gamma$ has the largest value.
We can immediately see that, if $\omega$ is a rational integer, then necessarily $\omega^2\in\{1,4,9\}$. 

\subsection{Units} \label{subsec:units}

Our first concern is to find all the totally positive units $\varepsilon$ satisfying $\gamma\succeq\varepsilon$. Recall that every such unit is a square, thus it can play a role in a square decomposition of the element $\gamma$.

\begin{lemma} \label{lemma:units}
Let $a\geq 7$ and let $\varepsilon$ be a totally positive unit in $\Z[\rho]$. If $\gamma\succeq \varepsilon$, then $\varepsilon\in\{1,\rho^2\}$. 
\end{lemma}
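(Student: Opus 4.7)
The plan is to reduce the question to an enumeration of pairs of integer exponents and then eliminate all but two of them. Since every totally positive unit of $\Z[\rho]$ is a square of a unit and $\{\rho,\rho'\}$ is a system of fundamental units, we have $\varepsilon=\rho^{2m}(\rho')^{2n}$ for some $m,n\in\Z$. A key preparatory identity, $\rho''=-1/(\rho+1)$, follows from the explicit cyclic Galois action $\rho\mapsto -1-1/\rho$ on the simplest cubic field (one verifies this preserves the minimal polynomial); equivalently $\rho''^{-2}=(\rho+1)^2$. This lets me express all three conjugates of $\varepsilon$ cleanly in terms of $\rho$ and $\rho+1$, whose ranges are controlled by \eqref{eq:estvalues}.

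The first reduction is to rule out negative exponents, matching each subcase to one of the three inequalities $\varepsilon\leq\gamma$, $\varepsilon'\leq\gamma'$, $\varepsilon''\leq\gamma''$. If $m\leq -1$ and $n\geq 0$, then $\varepsilon''=(\rho+1)^{2|m|}\rho^{2n}\geq(a+2)^2>a^2+12>\gamma''$. If $n\leq -1$ and $m\geq 0$, then $\varepsilon'=(\rho')^{2m}(\rho+1)^{2|n|}\geq(a+2)^2>\gamma'$. If $m,n\leq -1$, I would use $(\rho')^2=(\rho+1)^2/\rho^2$ to rewrite $\varepsilon'=\rho^{2|m|}(\rho+1)^{2(|n|-|m|)}$ and $\varepsilon''=((\rho+1)/\rho)^{2|n|}(\rho+1)^{2(|m|-|n|)}$: when $|m|\leq|n|$, $\varepsilon'$ contains the factor $\rho^{2|m|}\geq(a+1)^2>11>\gamma'$, and when $|m|>|n|$, $\varepsilon''$ contains the factor $(\rho+1)^{2(|m|-|n|)}\geq(a+2)^2>\gamma''$. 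Hence $m,n\geq 0$.

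For $m\geq 2$ and $n\geq 0$, $\varepsilon\geq\rho^4>(a+1)^4>(a+3)^2+2/a>\gamma$, so $m\in\{0,1\}$. Of the remaining candidates, $(0,0)$ and $(1,0)$ produce exactly $\varepsilon=1$ and $\varepsilon=\rho^2$. The cases $(0,n)$ with $n\geq 1$ are eliminated by $\varepsilon''=\rho^{2n}\geq\rho^2>(a+1)^2>\gamma''$. For $(1,n)$ with $n\geq 1$, since $|\rho'|>1$ the quantity $\rho^2(\rho')^{2n}$ is increasing in $n$, so it suffices to treat $n=1$, where $\varepsilon=\rho^2(\rho')^2=\rho''^{-2}=(\rho+1)^2$.

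This last subcase is the main obstacle: a crude application of $\rho>a+1$ only yields $(\rho+1)^2-\gamma\geq -7$, which is useless. The resolution is to substitute $\rho^2=a\rho+(a+3)+1/\rho$ (obtained by dividing the minimal polynomial by $\rho$) into $(\rho+1)^2-\gamma=(a-1)\rho^2-(a^2-a-1)\rho-(a^2+a+7)$; the expression collapses to $\rho+(a-10)+(a-1)/\rho$, which for $a\geq 7$ exceeds $(a+1)+(a-10)=2a-9\geq 5$. Combining the three reductions with this final computation gives $\varepsilon\in\{1,\rho^2\}$, proving the lemma.
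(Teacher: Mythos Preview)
Your proof is correct and follows the same overall strategy as the paper---write $\varepsilon=\rho^{2m}(\rho')^{2n}$ and eliminate exponent pairs---but the execution differs in two notable ways. First, the paper opens by invoking Lemma~\ref{lemma:units>a^2} (so that some conjugate exceeds $a^2$) and then argues ``without loss of generality $\varepsilon>a^2$'' before narrowing down $k,l$; you bypass that lemma entirely and instead run a clean four-way case split on the signs of $(m,n)$, which makes the argument self-contained. Second, your systematic use of the cyclic relation $\rho''=-1/(\rho+1)$ (equivalently $\rho'=-(\rho+1)/\rho$) lets you write every conjugate of $\varepsilon$ as a monomial in $\rho$ and $\rho+1$, so each elimination becomes a one-line size estimate; the paper works directly with the numerical bounds on $\rho,\rho',\rho''$ from \eqref{eq:estvalues} and leaves several verifications implicit. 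The most visible payoff is in the final case $\varepsilon=\rho^2\rho'^2=\rho''^{-2}=(\rho+1)^2$: the paper simply asserts that $\gamma-\rho^2\rho'^2$ is not totally positive, whereas your substitution $\rho^2=a\rho+(a+3)+1/\rho$ collapses $(\rho+1)^2-\gamma$ to $\rho+(a-10)+(a-1)/\rho>0$, which is a genuinely cleaner treatment of the only delicate inequality in the lemma.
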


\begin{proof}
If $\varepsilon\neq 1$, Lemma \ref{lemma:units>a^2} implies that one of the conjugates of $\varepsilon$ is greater than $a^2$. Without loss of generality, we can assume $\varepsilon>a^2$. Using the fundamental units, the unit $\varepsilon$ can be written as $\rho^k\rho'^l$ for some $k,l\in 2\Z$. As we can see from the estimates in (\ref{eq:estvalues}), the value of $\varepsilon$ can be greater than $a^2$ only if $k\geq 2$. On the other hand, $\varepsilon'=\rho'^k\rho''^l\leq \gamma$ for $k\geq 2$ only if $l\geq -2$. This condition on $l$ also implies that $\varepsilon\leq \gamma$ only for $k=2$ and $a\geq 7$. The other cases are not possible as we have $\gamma>\gamma',\gamma''$.    

Let us first focus on the case when $k=2$ and $l=-2$. Obviously, $\varepsilon'>\gamma''$, thus the only conjugate of $\varepsilon$ which can be totally smaller $\gamma$ is $\varepsilon'$. However, it can be directly verified that $\gamma-\rho'^2\rho''^{-2}$ is not totally positive for $a\geq 7$.  

Therefore, let $l\geq 0$. In these cases, clearly, $\varepsilon>\gamma''$ for $a\geq7$, thus $\varepsilon$ is the only conjugate of $\varepsilon$ which can be totally smaller than $\gamma$. First of all, let us assume $l\geq 6$. In this case, we have
\[
\varepsilon''=\rho''^2\rho^l>\frac{1}{(a+3)^2}(a+1)^6>\gamma''
\]
for $a\geq 7$, and thus we can exclude the cases with $l\geq 6$. 

Therefore, except for $\varepsilon=1$, we are left with the units $\rho^2$, $\rho^2\rho'^2$ and $\rho^2\rho'^4$. However, the last unit can be rewritten as $\rho'^2\rho''^{-2}$, which was excluded in the previous part. In the same manner, we can show that $\gamma-\rho^2\rho'^2$ is not totally positive for $a\geq 7$. Thus, the only units which can be (and actually are) totally smaller than $\gamma$ are exactly $1$ and $\rho^2$.   
\end{proof}

\subsection{Squares of $\sigma$-indecomposable integers} \label{subsec:inde}

In this part, we will find all non-unit $\sigma$-indecomposable integers $\beta$ such that $\gamma\succeq \beta^2$. Necessarily, in that case, $N(\gamma)\geq N(\beta^2)$. It can be easily computed that 
\[
N(\gamma)=9a^4+22a^3+247a^2+258a+1493.
\]

In our investigation, we can use the knowledge of totally positive indecomposable integers given by Theorem \ref{thm:indesimplest}, and the fact that every square of non-unit $\sigma$-indecomposable integer is a conjugate of some element of the form $\varepsilon\alpha^2$ where $\varepsilon$ is a totally positive unit and $\alpha\in\bt_0\cup\{1+\rho+\rho^2\}$. Thus, we firstly detect all the elements $\alpha$ for which $N(\alpha^2)=N(\beta^2)\leq N(\gamma)$. 

\begin{lemma} \label{lemma:smallnorms}
Let $a\geq 15$ and let $\alpha\in\bt_0\cup\{1+\rho+\rho^2\}$. If $ N(\alpha^2)\leq N(\gamma)$, then
\[
\alpha\in\{-w\rho+\rho^2;1\leq w\leq a\}\cup\{1+\rho+\rho^2,-1-(a+4)\rho+2\rho^2\}.
\]
\end{lemma}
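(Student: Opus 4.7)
The plan is to case-split on the first index $v$ in $\alpha = \alpha(v, W) \in \bt_0$. For $v = 0$, we have $\alpha(0, W) = -(W+1)\rho + \rho^2$ with $W \in \{0, 1, \ldots, a-1\}$, which is automatically of the form $-w\rho + \rho^2$ for some $w \in \{1, \ldots, a\}$; such $\alpha$ thus already appear in the conclusion's list without any norm inequality to check. Similarly, $1 + \rho + \rho^2$ and $\alpha(1, 1) = -1 - (a+4)\rho + 2\rho^2$ are explicitly listed. It therefore suffices to show that every remaining element of $\bt_0$, namely $\alpha(v, W)$ with $v \geq 1$ and $(v, W) \neq (1, 1)$, satisfies $N(\alpha(v, W))^2 > N(\gamma)$.

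The $v \geq 2$ cases reduce to the $v = 1$ case via iterated application of Lemma \ref{lemma:norm_ineq}. If $\alpha(v_0, W) \in \bt_0$ with $v_0 \geq 2$, one checks from the definition of $\bt_0$ that $\alpha(k, W) \in \bt_0$ for every $k \in \{1, 2, \ldots, v_0\}$, so Lemma \ref{lemma:norm_ineq} applies at each step and yields $N(\alpha(1, W)) < N(\alpha(2, W)) < \cdots < N(\alpha(v_0, W))$. Moreover, the admissible range $W \in [v_0, a - 2v_0 - 1]$ is contained in $[2, a - 3]$. Hence it is enough to prove $N(\alpha(1, W))^2 > N(\gamma)$ for every $W \in [2, a - 3]$.

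The key step is to obtain a closed form for $N(\alpha(1, W))$. Writing $\alpha(1, W) = \alpha(1, 0) - W\rho$ presents the norm as a cubic polynomial in $W$ with leading coefficient $-N(\rho) = -1$. A direct determinant computation using the multiplication matrix of $\alpha(1, W)$ in the basis $\{1, \rho, \rho^2\}$ (or an equivalent calculation with symmetric functions of the conjugates) yields
\[
N(\alpha(1, W)) = -W^3 - 6W^2 + (a^2 + 3a - 3)W + (a^2 + 3a + 1).
\]
On $[2, a - 3]$ this cubic is unimodal, since its derivative is $a^2 + 3a - 39 > 0$ at $W = 2$ and $-2a^2 + 9a + 6 < 0$ at $W = a - 3$ once $a \geq 6$, so the minimum is attained at an endpoint. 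Since $N(\alpha(1, 2)) = 3a^2 + 9a - 37$ and $N(\alpha(1, a-3)) = 4a^2 - 17$ differ by $(a - 4)(a - 5) \geq 0$, for $a \geq 15$ the minimum on $[2, a-3]$ is $3a^2 + 9a - 37$.

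Squaring and subtracting,
\[
(3a^2 + 9a - 37)^2 - N(\gamma) = 32a^3 - 388a^2 - 924a - 124,
\]
which is positive for $a \geq 15$ (its derivative in $a$ is already positive at $a = 15$, where the value is $6716 > 0$). This completes the required exclusion. The main obstacle is deriving the explicit polynomial formula for $N(\alpha(1, W))$; the hypothesis $a \geq 15$ arises naturally from the tightness of the $W = 2$ comparison and indeed fails for $a = 14$.
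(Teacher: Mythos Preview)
Your proof is correct and follows essentially the same route as the paper: handle $v=0$ trivially, reduce $v\ge 2$ to $v=1$ via Lemma~\ref{lemma:norm_ineq}, and then exclude $\alpha(1,W)$ for $W\in[2,a-3]$ by checking the two endpoints using unimodality of $W\mapsto N(\alpha(1,W))$. The only noteworthy difference is that you derive the closed form $N(\alpha(1,W))=-W^{3}-6W^{2}+(a^{2}+3a-3)W+(a^{2}+3a+1)$ and establish unimodality via the sign of the derivative, whereas the paper simply quotes the unimodality from \cite{KT} and checks the endpoint values $N(\alpha(1,2))^{2}$ and $N(\alpha(1,a-3))^{2}$ directly; the resulting numerical comparisons and the threshold $a\ge 15$ coincide.
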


\begin{proof}
It can be easily computed that $N((1+\rho+\rho^2)^2)<N(\gamma)$ for $a\geq -1$. Thus, let us now focus on $\alpha\in\bt_0$. In this case, we have $\alpha=\alpha(v,W)=-v-(a(v+2)+1+W)\rho+(v+1)\rho^2$ for some admissible values of $v,W$. In the following, we will use Lemma \ref{lemma:norm_ineq}, which compares norms of elements belonging to $\bt_0$. 

Firstly, let us focus on the case when $v=1$. For $W=1$ (the smallest value of $W$ for $v=1$), we get $N(\alpha(1,1)^2)=4a^4+24a^3-108a+81<N(\gamma)$ for $a\geq -1$, i.e., we obtain the element $-1-(a+4)\rho+2\rho^2$ listed in the statement of the lemma. On the other hand, $N(\alpha(1,2)^2)=9a^4+54a^3-141a^2-666a+1369>N(\gamma)$ for $a\geq 15$. Recall that the norm of $\alpha(v,W)$ for fixed $v$ increases in $W$, and then it can start to decrease. Thus, to complete the proof for $v=1$, it suffices to check the norm for $W=a-3$ (the largest $W$ for $v=1$ and $a\geq 15$). Nevertheless, we obtain $N(\alpha(1,a-3)^2)=16a^4-136a^2+289>N(\gamma)$ for $a\geq 10$.

By Lemma \ref{lemma:norm_ineq} and using the previous part, the norms of $\alpha(v,W)^2$ for $v\geq 2$ are too large to be smaller than $N(\gamma)$. Thus, we are left with one element with $v=1$ and all the elements with $v=0$, which completes the proof.          
\end{proof}

Therefore, we have determined all the representatives of the $\sigma$-indecomposable integers $\alpha$ with sufficiently small norms. Now we will find all the totally positive units $\varepsilon$ for which $\varepsilon\alpha^2$ or one of its conjugates is indeed totally smaller than $\gamma$. To reach this aim, we will use Lemmas \ref{lemma:units>a^2} and \ref{lemma:a^4/a^2+a^2}, which state some useful results about units in the simplest cubic fields.

\begin{lemma} \label{lemma:inde}
Let $a\geq 15$ and let $\beta$ be a non-unit $\sigma$-indecomposable integer in $\Z[\rho]$ for some signature $\sigma$. If $\gamma\succeq\beta^2$, then $\beta^2$ is one of the following elements: 
\begin{enumerate}
\item $\rho'^2\rho''^2(-\rho+\rho^2)^2=1-2\rho+\rho^2$,
\item $\rho''^2\rho^2(-\rho'+\rho'^2)^2=a^2+a+1+(a^2-a+1)\rho-(a-1)\rho^2$,
\item $\rho''^2\rho^2(-2\rho'+\rho'^2)^2=a^2-a+(a^2-3a+1)\rho-(a-3)\rho^2$,
\item $\rho^2\rho'^2(-(a-1)\rho''+\rho''^2)^2=a^2+a-1+(a^2-a-3)\rho-(a-2)\rho^2$.
\end{enumerate}
\end{lemma}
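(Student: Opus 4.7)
The plan is to enumerate all candidate pairs $(\tilde\alpha, \varepsilon^2)$ in which $\tilde\alpha$ is a Galois conjugate of one of the representatives supplied by Lemma \ref{lemma:smallnorms} and $\varepsilon^2$ is a totally positive unit, and then check which of them satisfy $\gamma \succeq \varepsilon^2\tilde\alpha^2$. By the discussion preceding Lemma \ref{lemma:smallnorms}, every $\beta^2$ for which $\beta$ is a non-unit $\sigma$-indecomposable with $\gamma \succeq \beta^2$ arises this way, so this enumeration is complete. The candidates for $\tilde\alpha$ are the three Galois conjugates of each of $\alpha_w := -w\rho + \rho^2$ for $1 \leq w \leq a$, of $1+\rho+\rho^2$, and of $-1-(a+4)\rho+2\rho^2$.

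For each $\tilde\alpha$ I would first estimate the three conjugates of $\tilde\alpha^2$ using \eqref{eq:estvalues}; for example $\alpha_w = \rho(\rho-w)$ has conjugate magnitudes of order $(a+1)(a+1-w)$, $w+1$, and $w/(a+3)$. Writing $\varepsilon = \rho^k\rho'^l$, the three conjugates of $\varepsilon$ have sizes of order $a^k$, $a^{-l}$, and $a^{l-k}$ up to bounded factors. Since $\gamma' < 11$ is much smaller than $\gamma, \gamma'' \sim a^2$, the middle of the three inequalities in $\varepsilon^2\tilde\alpha^2 \preceq \gamma$ is the binding one: it dictates how $k, l$ must shrink the corresponding conjugate of $\tilde\alpha^2$ down to constant size. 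Lemma \ref{lemma:units>a^2} excludes any nontrivial unit whose conjugates are all smaller than $a$, and Lemma \ref{lemma:a^4/a^2+a^2} further restricts the possible distributions of sizes among the three conjugates of $\varepsilon^2$. Between them, only finitely many exponent pairs $(k, l)$ remain for each $\tilde\alpha$, and these are then verified directly against $\gamma$ using \eqref{eq:estvalues}.

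Running this case analysis, the surviving pairs are $(\rho^2-\rho,\,\rho'^2\rho''^2)$, $(\rho'^2-\rho',\,\rho''^2\rho^2)$, $(\rho'^2-2\rho',\,\rho''^2\rho^2)$, and $(\rho''^2-(a-1)\rho'',\,\rho^2\rho'^2)$. Using $\rho\rho'\rho''=1$, these simplify to $\beta = \rho-1$, $\rho'-1$, $\rho'-2$, and $\rho''-(a-1)$, whose squares give exactly items (1)--(4) of the statement.

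The main obstacle is the sharpness of the middle-conjugate bound. Several near-miss candidates fail only narrowly; for instance $\beta = \rho - 2$ (whose conjugates are $\rho-2$, $\rho'-2$, $\rho''-2$) satisfies $\gamma \succeq \beta^2$ in its first and third coordinates, but a direct computation with \eqref{eq:estvalues} shows $\gamma' - (\rho'-2)^2 < 0$ for $a \geq 15$, even though both quantities are of size roughly $9$. Ruling out such edge cases requires the sharp estimates in \eqref{eq:estvalues} rather than leading-order magnitudes, and they occur among $\alpha_w$ and their conjugates with $w$ close to $1$, $2$, or $a-1$. The remaining cases (intermediate $\alpha_w$, together with $1+\rho+\rho^2$ and $-1-(a+4)\rho+2\rho^2$) fail the relevant inequality by a wider margin and are excluded by simpler size estimates.
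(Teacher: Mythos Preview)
Your plan is correct and coincides with the paper's own proof: take the finite list of representatives from Lemma~\ref{lemma:smallnorms}, pair each conjugate with a totally positive unit, use Lemmas~\ref{lemma:units>a^2} and~\ref{lemma:a^4/a^2+a^2} together with the estimates~\eqref{eq:estvalues} to cut the units down to a finite list, and then check the survivors directly. The paper carries out precisely this case analysis, treating $1+\rho+\rho^2$, then $\alpha_w$ for $3\le w\le a-2$, then $-1-(a+4)\rho+2\rho^2$, and finally $w=a,a-1,2,1$ separately; your simplification via $\rho\rho'\rho''=1$ to $\beta\in\{\rho-1,\rho'-1,\rho'-2,\rho''-(a-1)\}$ is a nice repackaging of the same outcome.
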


\begin{proof}
We will proceed as follows. We will consider the elements $\alpha$ given by Lemma \ref{lemma:smallnorms} and discuss whether some conjugate of $\varepsilon\alpha^2$ can be totally smaller than $\gamma$ for some totally positive unit $\varepsilon$.
 
Let us start with $\alpha=1+\rho+\rho^2$. Using (\ref{eq:estvalues}), we can see that
\[
\alpha^2>a^4+6a^3+15a^2+18a+9>a^2+6a+9+\frac{2}{a}>\gamma>\gamma',\gamma''
\]
for $a\geq 15$.
Thus, we can immediately exclude all the conjugates of $(1+\rho+\rho^2)^2$, i.e., when we multiple $\alpha^2$ by the totally positive unit $\varepsilon=1$. Let now $\varepsilon\neq 1$. 
By Lemma \ref{lemma:units>a^2}, without loss of generality, we can suppose $\varepsilon>a^2$. However, we can immediately exclude the conjugates of $\varepsilon(1+\rho+\rho^2)^2$, and we are left with the elements of the form $\varepsilon(1+\rho'+\rho'^2)^2$ and $\varepsilon(1+\rho''+\rho''^2)^2$. 

Now, we will use Lemma \ref{lemma:a^4/a^2+a^2}. If $\varepsilon>a^4$, it can be easily verified that $\varepsilon(1+\rho'+\rho'^2)^2,\varepsilon(1+\rho''+\rho''^2)^2>\gamma$, thus these cases are not possible. Suppose $\varepsilon\neq \rho^2,\rho''^{-2}$. In that case, our unit $\varepsilon$ has a conjugate greater than $a^2$. Since this conjugate cannot be paired with $1+\rho+\rho^2$, we can restrict to the elements $\varepsilon(1+\rho'+\rho'^2)^2$ with $a^2<\varepsilon,\varepsilon'<a^4$ (the case $\varepsilon(1+\rho''+\rho''^2)^2$ is covered by that). However, using a similar method as in Lemma \ref{lemma:units}, we can show that under these conditions, $\varepsilon=\rho^2\rho'^{-2}$. Nevertheless, it can be easily checked that none conjugate of $\rho^2\rho'^{-2}(1+\rho'+\rho'^2)^2$ is totally smaller than $\gamma$. 

Thus we are left with the elements of the form $\varepsilon(1+\rho'+\rho'^2)^2$ and  $\varepsilon(1+\rho''+\rho''^2)^2$ where $\varepsilon = \rho^2,\rho''^{-2}$. However, for these cases, we can directly verify that none of them (or their conjugates) is totally smaller than $\gamma$.  

\bigskip

We will proceed with the elements from $\bt_0$. First of all, let us assume that $\alpha=-w\rho+\rho^2$ where $3\leq w\leq a-2$. In this case, as before, $\alpha^2>\gamma$ (as $\alpha^2>((a+1)^2-w(a+2))^2\geq 4a^2+20a+25>\gamma$), thus we can exclude $\varepsilon=1$. Let now $\varepsilon>a^2$. 
Obviously, $\varepsilon\alpha^2>\gamma$ , and $\varepsilon\alpha'^2>\gamma$ as $\varepsilon\alpha'^2>a^2(w+1)^2\geq 16a^2>\gamma$. Hence only the conjugates of $\varepsilon\alpha''^2$ can be totally smaller than $\gamma$. Similarly as before, using Lemma \ref{lemma:a^4/a^2+a^2}, we can exclude all the units with $\varepsilon>a^4$, $\varepsilon'>a^2$ and $\varepsilon''>a^2$, and neither of $\rho^2$ or $\rho''^{-2}=\rho^2\rho'^2$ produce an element totally smaller than $\gamma$, which can checked by direct computations.   

Put $\alpha=-1-(a+4)\rho+2\rho^2$. Using Lemma \ref{lemma:units>a^2} and basic estimates (\ref{eq:estvalues}), we can easily find candidates on totally smaller integers, which are conjugates of $\varepsilon\alpha''^2$ where $\varepsilon\in\{\rho^2\rho'^2,\rho^4\rho'^2,\rho^4\rho'^4\}$. Nevertheless, by comparing norms, traces, or the remaining coefficient of the minimal polynomial of $\gamma$ and $\alpha$, we can exclude all of these several concrete cases. 

Let now $\alpha=-a\rho+\rho^2$. In this case, $\alpha>a^2-2a+1$, $\alpha'>(a+1)^2$ and $\alpha''>\frac{a^2}{(a+3)^2}$. Nevertheless, $\text{Tr}(\alpha)=2a^2+10a+18>2a^2+2a+36=\text{Tr}(\gamma)$, thus we can exclude $\varepsilon=1$. Regarding $\varepsilon\neq 1$, the only possible candidates are again conjugates of $\rho^2\alpha''^2$ and $\rho^2\rho'^2\alpha''^2$. Nevertheless, by direct calculations, we can easily show that none of them is totally smaller than $\gamma$.    

We will proceed with $\alpha=-(a-1)\rho+\rho^2$. In this case, we have $\alpha^2>4a^2+8+\frac{4}{a^2}>\gamma$, thus we can exclude $\varepsilon=1$. For $\varepsilon\neq 1$, similarly, as before, we are left with $\rho^2\alpha''^2$ and $\rho^2\rho'^2\alpha''^2$. In this case, we indeed get an element, which is totally smaller than $\gamma$, and it is equal to
\[
\rho^2\rho'^2\alpha''^2=a^2+a-1+(a^2-a-3)\rho-(a-2)\rho^2.
\]

Let now $\alpha=-2\rho+\rho^2$. We can conclude that only the conjugates of $\rho^2\rho'^2\alpha''^2$ can be totally smaller than $\gamma$, from which only the one satisfies this condition, namely
\[
\rho''^2\rho^2\alpha'^2=a^2-a+(a^2-3a+1)\rho-(a-3)\rho^2.
\]

Therefore, it remains to consider the element $\alpha=-\rho+\rho^2$. Using Lemma \ref{lemma:a^4/a^2+a^2} and estimates (\ref{eq:estvalues}), we can easily derive that some conjugate of our element has to be of the form $\varepsilon\alpha''^2$ where $\varepsilon\in\{\rho^2\rho'^2,\rho^4\rho'^2,\rho^4\rho'^4\}$. From these nine cases, only two are actually totally smaller than $\gamma$, specifically
\begin{align*}
\rho'^2\rho''^2\alpha^2&=1-2\rho+\rho^2,\\
\rho''^2\rho^2\alpha'^2&=a^2+a+1+(a^2-a+1)\rho-(a-1)\rho^2.
\end{align*}
The others can be excluded by direct calculations, completing the proof.
\end{proof}

\subsection{Sums of $\sigma$-indecomposable integers} \label{subsec:sums}
In the previous subsections, we have found all the squares of $\sigma$-indecomposable integers $\beta$ for all signatures $\sigma$ for which we have $\gamma\succeq \beta^2$. Now we will consider possible $\sigma$-decomposable integers, which we can create from these $\sigma$-indecomposables and which are (possibly) totally smaller than $\gamma$. However, to do that, we must know the signatures of these elements, which can be found in Table \ref{tab:signs}.  

\begin{table}
\begin{tabular}{|c|c|c|}
\hline
$\sigma$-indecomposable integer $\beta$ & Signature of $\beta$ & Signature of $-\beta$\\
\hline
$1$ & $(+,+,+)$ & $(-,-,-)$\\
\hline
$\rho$ & $(+,-,-)$ & $(-,+,+)$\\
\hline
$\rho'\rho''(-\rho+\rho^2)$& $(+,-,-)$ & $(-,+,+)$\\
\hline
$\rho''\rho(-\rho'+\rho'^2)$  & $(-,-,+)$ & $(+,+,-)$\\
\hline
$\rho''\rho(-2\rho'+\rho'^2)$  & $(-,-,+)$& $(+,+,-)$\\
\hline
$\rho\rho'(-(a-1)\rho''+\rho''^2)$ & $(-,+,-)$ & $(+,-,+)$\\
\hline 
\end{tabular}

\caption{Signatures of $\sigma$-indecomposable integers whose squares are totally smaller than $\gamma$.} \label{tab:signs}
\end{table} 

\begin{lemma} \label{lemma:decompo}
Let $a\geq 15$ and $\gamma\succeq \omega^2$. If $\omega$ is $\sigma$-decomposable for some $\sigma$, then $\omega^2\in\{4,9\}$.
\end{lemma}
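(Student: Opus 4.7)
The plan is to enumerate, signature by signature, every $\sigma$-decomposable $\omega$ with $\gamma \succeq \omega^2$ and show that the only possibilities are $\omega = \pm 2, \pm 3$, giving $\omega^2 \in \{4, 9\}$. Iterating the definition, any such $\omega$ of signature $\sigma$ can be written as $\omega = \sum_{i=1}^n \beta_i$ with $n \geq 2$ and each $\beta_i$ a $\sigma$-indecomposable of signature $\sigma$. Since cross terms $\beta_i\beta_j$ are totally positive (same signature), expanding $\omega^2$ yields $\gamma \succeq \omega^2 \succeq \beta_i^2$ for every $i$, so each $\beta_i$ must come from Lemmas~\ref{lemma:units} and~\ref{lemma:inde}. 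Lemma~\ref{lemma:units} supplies the units $\pm 1$ and $\pm \rho$; the squares listed in Lemma~\ref{lemma:inde}, via $\rho\rho'\rho'' = 1$, identify each non-unit $\beta_i$ up to sign as one of $\rho - 1$, $\rho' - 1$, $\rho' - 2$, or $\rho'' - (a-1)$. Reading off signatures from Table~\ref{tab:signs}, these summands partition into the eight signature classes with at most two admissible summands per class. Since same-signature summands combine only with non-negative coefficients in a decomposition, every candidate $\omega$ is parametrized by one or two non-negative integer coefficients whose sum is at least $2$.

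I would then handle each signature by picking one Galois conjugate whose image of $\gamma$ is too small to accommodate $\omega^2$. For $(+,+,+)$ and $(-,-,-)$ one has $\omega = \pm n$ with $n \geq 2$, and $n^2 < \gamma' < 11$ forces $n \in \{2, 3\}$, delivering $\omega^2 \in \{4, 9\}$. For $(+,-,-)$ and $(-,+,+)$, the first conjugate of $\omega = \pm((m+k)\rho - k)$ exceeds $(m+k)a \geq 2a$ (since $\rho > a + 1$), giving $\omega^2 > 4a^2 > \gamma$. For $(-,-,+)$ and $(+,+,-)$, the third conjugate sends $\rho'$ to $\rho$, so $|\omega''| = (m+k)\rho - (m+2k) \geq ma + k(a-1) \geq 2(a-1)$ clashes with $\gamma'' < a^2 + 12$. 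For $(-,+,-)$ and $(+,-,+)$, the second conjugate sends $\rho''$ to $\rho$, producing $|\omega'| = n(\rho - (a-1)) > 2n \geq 4$, so $\omega'^2 > 16 > \gamma'$. In every non-trivial signature a contradiction arises, leaving only $\omega^2 \in \{4, 9\}$.

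The main obstacle is choosing the ``right'' conjugate in each signature class: the dominant occurrence of $\rho$, $\rho'$, or $\rho''$ in a summand dictates which embedding makes the corresponding conjugate of $\omega$ of order $a$, thereby forcing $\omega^2$ past the relevant conjugate of $\gamma$. A minor subtlety arises in the $(-,-,+)$ case, where the constant-term coefficient $m + 2k$ is a priori unbounded; the rewriting $(m+k)\rho - (m+2k) \geq ma + k(a-1)$ absorbs the constant into the $\rho$-term, yielding a lower bound that depends only on $m + k \geq 2$.
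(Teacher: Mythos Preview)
Your proof is correct and follows essentially the same approach as the paper: both enumerate signatures, restrict to the $\sigma$-indecomposables supplied by Lemmas~\ref{lemma:units} and~\ref{lemma:inde}, and then show that any sum of two or more of them has a conjugate whose square exceeds the corresponding conjugate of $\gamma$. Your use of $\rho\rho'\rho''=1$ to rewrite the four non-unit indecomposables simply as $\rho-1$, $\rho'-1$, $\rho'-2$, $\rho''-(a-1)$ is a nice bookkeeping simplification that the paper does not make explicit, but the underlying argument and the conjugate-by-conjugate comparison are the same.
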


\begin{proof}
Now we will consider possible sums of our $\sigma$-indecomposable integers. Note that we can sum up only the elements with the same signatures. Moreover, the opposite signatures (i.e., which have all the signs opposite) behave in the same manner and give the same squares, and thus it suffices to study only one of each such a pair. 
\begin{enumerate}
\item Signature $(+,+,+)$ (respectively, $(-,-,-)$): Here we have only one element, namely $1$, which produces two $\sigma$-decomposable integers $2$ and $3$ whose squares $4$ and $9$ are totally smaller than $\gamma$.
\item Signature $(+,-,-)$ (respectively, $(-,+,+)$): The set of $\sigma$-indecomposables for this signature consists of the elements $\rho$ and $\rho'\rho''(-\rho+\rho^2)$. However, we can easily compute that
\begin{enumerate}
\item $(2\rho)^2>4(a+1)^2>\gamma$,
\item $(\rho+\rho'\rho''(-\rho+\rho^2))^2=1-4\rho+4\rho^2>4a^2+4a-3>\gamma$,
\item $(2\rho'\rho''(-\rho+\rho^2))^2=4-8\rho+4\rho^2>4a^2-8>\gamma$ 
\end{enumerate}
for $a\geq 15$.
Moreover, these results imply that our $\omega$ cannot also be a sum of more than two $\sigma$-indecomposable integers.
Thus, in this signature, none square of $\sigma$-decomposable integer is totally smaller than $\gamma$.
\item Signature $(-,-,+)$ (respectively, $(+,+,-)$): In this case, we consider exactly two $\sigma$-indecomposable integers $\rho''\rho(-\rho'+\rho'^2)$ and $\rho''\rho(-2\rho'+\rho')$. However, we can easily show that
\begin{enumerate}
\item $((2\rho''\rho(-\rho'+\rho'^2))^2)''>\gamma''$,
\item $(\rho''\rho(-\rho'+\rho'^2)+\rho''\rho(-2\rho'+\rho'^2))^2)''>\gamma''$,
\item $((2\rho''\rho(-2\rho'+\rho'^2))^2)''>\gamma''$ 
\end{enumerate}
for $a\geq 15$.
Thus we do not obtain any additional element.
\item Signature $(-,+,-)$ (respectively, $(+,-,+)$): This case contains exactly one $\sigma$-indecomposable integer, namely $\rho\rho'(-(a-1)\rho''+\rho''^2)$. However, it can be easily computed that 
\[
((2\rho\rho'(-(a-1)\rho''+\rho''^2))^2)''>\gamma''
\]
for $a\geq 15$, thus this case does not produce more elements to consider.     
\end{enumerate}

\end{proof}

\subsection{Proof of Theorem \ref{thm:main}}   

Using the results of Lemmas \ref{lemma:units}, \ref{lemma:inde} and \ref{lemma:decompo}, we can now prove Theorem $\ref{thm:main}$ stated in the introduction.

\begin{proof}[Proof of Theorem \ref{thm:main}]
In Subsections \ref{subsec:units}, \ref{subsec:inde} and \ref{subsec:sums}, we have found all the elements $\omega$ such that $\gamma\succeq \omega^2$ for $a\geq 15$. We have obtained the following squares:
\begin{enumerate}
\item rational integers $1$, $4$ and $9$,
\item squares of $\sigma$-indecomposable integers of the form 
\begin{enumerate}
\item $\rho^2$,
\item $1-2\rho+\rho^2$, 
\item $a^2+a+1+(a^2-a+1)\rho-(a-1)\rho^2$, 
\item $a^2-a+(a^2-3a+1)\rho-(a-3)\rho^2$,
\item $a^2+a-1+(a^2-a-3)\rho-(a-2)\rho^2$.
\end{enumerate} 
\end{enumerate} 
Using a computer program (all the calculations were performed in Mathematica), we can show that we get the same elements (and none more) also for $5\leq a\leq 14$. For $a=3$, we get two additional elements $20+11\rho-3\rho^2$ and $1+2\rho+\rho^2$, and for $a=4$, we obtain $1+2\rho+\rho^2$. Nevertheless, using a similar procedure as below, we can prove that even in these cases, we need at least $6$ squares to express $\gamma$. Thus, in the following, we will suppose $a\geq 5$.

Recall that $\gamma=a^2+a+8+(a^2-a+1)\rho+(2-a)\rho^2$.
The coefficient before $\rho$ of $\gamma$ is clearly odd, thus in every square decomposition of $\gamma$, we need at least one element with this coefficient odd. Looking at the list, this is satisfied by the elements 
\begin{align*}
&a^2+a+1+(a^2-a+1)\rho-(a-1)\rho^2,\\ 
&a^2-a+(a^2-3a+1)\rho-(a-3)\rho^2,\\
&a^2+a-1+(a^2-a-3)\rho-(a-2)\rho^2.
\end{align*}
Note that these elements are also the only ones, which have positive coefficient before $\rho$. However, for $a^2-a+(a^2-3a+1)\rho-(a-3)\rho^2$ and $a^2+a-1+(a^2-a-3)\rho-(a-2)\rho^2$, the value of this coefficients is strictly smaller than $a^2-a+1$. Thus, if our square decomposition of $\gamma$ contained one of these two element, some other summand would have to be one of these three above-mentioned elements.    Nevertheless, in that case, the coefficient before $1$ (we mean the coefficients in the basis $1,\rho,$ and $\rho^2$) is at least $2a^2-2a>a^2+a+8$ for $a\geq 5$. This is not possible since all the squares totally smaller than $\gamma$ have a non-negative coefficient before $1$. Hence no square decomposition of $\gamma$ can contain the elements $a^2-a+(a^2-3a+1)\rho-(a-3)\rho^2$ and $a^2+a-1+(a^2-a-3)\rho-(a-2)\rho^2$. 

It implies that one summand of our decomposition must be $a^2+a+1+(a^2-a+1)\rho-(a-1)\rho^2$, and we get 
\[
\gamma=a^2+a+1+(a^2-a+1)\rho-(a-1)\rho^2+\delta,
\] 
where $\delta=7+\rho^2$. Obviously, every square decomposition of $\delta$ may consist  of only the elements $1$, $4$, $\rho^2$ and $1-2\rho+\rho^2$ since the coefficient before $1$ of the other elements from the list is too large. Nevertheless, $1-2\rho+\rho^2$ cannot appear in this decomposition since its coefficient before $\rho$ is negative, and the remaining three integers have this coefficient equal to zero. 
Thus, only the elements $1$, $4$, and $\rho^2$ can appear in a square decomposition of $\delta$, and for that, we need at least $5$ of these elements. It implies that every square decomposition of $\gamma$ consists of at least $6$ non-zero squares, which together with the upper bound, gives $\P(\Z[\rho])=6$.   
\end{proof}

\section{The case $-1\leq a\leq 2$} \label{Sec:asmall}

We will now focus on the remaining cases of $a$, i.e., $-1\leq a\leq 2$. However, the situation is different here. Indeed, at least the element $\gamma$ can be expressed as a sum of less than $6$ squares for all of these cases, thus it cannot provide us the same lower bound as before. Moreover, based on computer experiments, we may propose that the Pythagoras number of $\Z[\rho]$ is even less than $6$. Nevertheless, our computer program searches for elements of small traces, and thus we cannot exclude that there exists an element that can be written as a sum of more squares and has a large trace. 

The lower bounds on $\P(\Z[\rho])$ for $-1\leq a\leq 2$ are provided in Table \ref{tab:lbsmalla}. We also show here an example of an element for which this lower bound is attained.

\begin{table}[h]
\begin{tabular}{|c|c|c|}
\hline
$a$ & $\P(\Z[\rho])$ & Example of element\\
\hline
$-1$ & $\geq 4$ & $7$\\
\hline
$0$ & $\geq 5$ & $-8\rho+8\rho^2$\\
\hline
$1$ & $\geq 5$ & $4-3\rho+2\rho^2$\\
\hline
$2$ & $\geq 5$ & $7+\rho^2$\\
\hline
\end{tabular}
\caption{The lower bound on $\P(\Z[\rho])$ for $-1\leq a\leq 2$ and an example of element for which this lower bound is attained.} \label{tab:lbsmalla}
\end{table}  

\section*{Acknowledgements}
The author is greatly indebted to Pavlo Yatsyna and Vítězslav Kala for their advice during the preparation of this paper.


\begin{thebibliography}{10} 

\bibitem{Ba} S. Balady, \textit{Families of cyclic cubic fields}, J. Number Theory 167, 394--406 (2016).

\bibitem{BK1} V. Blomer and V. Kala: \textit{Number fields without $n$-ary universal quadratic forms}, Math. Proc. Cambridge Philos. Soc. 159 (2), 239--252 (2015).

\bibitem{BK2} V. Blomer and V. Kala, \textit{On the rank of universal quadratic forms over real quadratic fields}, Doc. Math. 23, 15--34 (2018). 

\bibitem{By} D. Byeon, \textit{Class number 3 problem for the simplest cubic fields},
Proc. Amer. Math. Soc. 128, 1319--1323 (2000). 

\bibitem{CEP} J. W. S. Cassels, W. J. Ellison and A. Pfister, \textit{On sums of squares and on elliptic curves over function fields}, J. Number Theory 3, 125--149 (1971).

\bibitem{CLSTZ} M. \v{C}ech, D. Lachman, J. Svoboda, M. Tinkov\'a and K. Zemkov\'a, \textit{Universal quadratic forms and indecomposables over biquadratic fields}, Math. Nachr. 292, 540--555 (2019).
 
\bibitem{CKR} W. K. Chan, M.-H. Kim and S. Raghavan, \textit{Ternary universal integral quadratic forms over real quadratic fields}, Japan. J. Math. 22, 263--273 (1996). 

\bibitem{CDL} M. D. Choi, Z. D. Dai, T. Y. Lam and B. Reznick. \textit{The Pythagoras number of some affine algebras and local algebras}, J. Reine Angew. Math. 336, 45--82
(1982).

\bibitem{Co} H. Cohn, \textit{A device for generating fields of even class number}, Proc. Amer. Math. Soc. 7, 595--598 (1956).

\bibitem{DS} A. Dress and R. Scharlau: \textit{Indecomposable totally positive numbers in real quadratic orders}, J. Number Theory 14, 292--306 (1982).

\bibitem{Fo}  K. Foster, \textit{HT90 and ``simplest'' number fields},  Illinois J. Math. 55, 1621--1655 (2011).

\bibitem{God} H. J. Godwin, \textit{The determination of units in totally real cubic fields}, Proc. Cambridge Philos. Soc. 56, 318--321 (1960).

\bibitem{HK} T. Hejda and V. Kala, \textit{Additive structure of totally positive quadratic integers}, Manuscripta Math. 163, 263--278 (2020).

\bibitem{Ho} D. W. Hoffmann, \textit{Pythagoras numbers of fields}, J. Amer. Math. Soc. 12 (3), 839--848 (1999).

\bibitem{HKK} J. S. Hsia, Y. Kitaoka and M. Kneser, \textit{Representations of positive definite quadratic forms}, J. Reine Angew. Math. 301, 132--141 (1978).

\bibitem{Hu} Y. Hu, \textit{The Pythagoras number and the $u$-invariant of Laurent series fields in several variables}, J. Algebra 426, 243--258 (2015).


\bibitem{Ka} V. Kala, \textit{Universal quadratic forms and elements of small norm in real quadratic fields}, Bull. Aust. Math. Soc. 94, 7--14 (2016).

\bibitem{KT} V. Kala and M. Tinková, \textit{Universal quadratic forms, small norms and traces in families of number fields}, preprint. https://arxiv.org/abs/2005.12312

\bibitem{KY1} V. Kala and P. Yatsyna, \textit{Sums of squares in S-integers}, New York J. Math. 26, 1145--1154 (2020).

\bibitem{KY2} V. Kala and P. Yatsyna, \textit{Lifting problem for universal quadratic forms}, Adv. Math. 377, 107497 (2021).


\bibitem{Ki1} B. M. Kim, \textit{Finiteness of real quadratic fields which admit positive integral diagonal septenary universal forms}, Manuscr. Math. 99, 181--184 (1999).

\bibitem{Ki2} B. M. Kim, \textit{Universal octonary diagonal forms over some real quadratic fields}, Commentarii Math. Helv. 75, 410--414 (2000).


\bibitem{KTZ} J. Krásenský, M. Tinková and K. Zemková, \textit{There are no universal ternary quadratic forms over biquadratic fields}, Proc. Edinb. Math. Soc. 63 (3),  861--912 (2020).

\bibitem{LP} F. Lemmermeyer and A. Peth\"{o}, \textit{Simplest Cubic Fields}, Manuscripta Math. 88, 53--58 (1995).

\bibitem{Let} G. Lettl, \textit{A lower bound for the class number of certain cubic number fields}, Math. Comp. 46, 659--666 (1986).

\bibitem{Lo} S. Louboutin, \textit{Class-number problems for cubic number fields}, Nagoya Math. J. 138, 199--208 (1995).

\bibitem{Ma} H. Maa{\ss}, \textit{\" Uber die Darstellung total positiver Zahlen des K\" orpers $R(\sqrt 5)$
	als Summe von drei Quadraten}, Abh. Math. Sem. Univ. Hamburg 14, 185--191 (1941).


\bibitem{N} W. Narkiewicz, \textit{Elementary and analytic theory of algebraic numbers}, 3rd Edition, Springer-Verlag, Berlin, 2004.


\bibitem{Pe} O. Perron, \textit{Die Lehre von den Kettenbr\"uchen}, B. G. Teubner, 1913.

\bibitem{Pet}  M. Peters, \textit{Summe von Quadraten in Zahlringen}, J. Reine Angew. Math. 268/269, 318--323 (1974).

\bibitem{Pf} A. Pfister,\textit{Quadratic forms with applications to algebraic geometry and topology}, London Math. Soc. Lect. Notes 217, Cambridge University Press, 1995.

\bibitem{Pre} A. Prestel, \textit{Remarks on the Pythagoras and Hasse number of real fields}, J. Reine Angew. Math. 303/304, 284--294 (1978).

\bibitem{Sa} H. Sasaki, \textit{Quaternary universal forms over $\mathbb Q[\sqrt{13}]$}, Ramanujan J. 18, 73--80 (2009).

\bibitem{Sc} R. Scharlau, \textit{On the Pythagoras number of orders in totally real number fields}, J. Reine Angew. Math. 316, 208--210 (1980).

\bibitem{Sh} D. Shanks, \textit{The simplest cubic number fields}, Math. Comp. 28, 1137--1152 (1974).

\bibitem{Si1} C. L. Siegel, \textit{Darstellung total positiver Zahlen durch Quadrate}, Math. Z. 11, 246--275 (1921).

\bibitem{Si2} C. L. Siegel, \textit{Sums of m-th powers of algebraic integers}, Ann. of Math. 46, 313--339 (1945).  


\bibitem{Wa} L. Washington, \textit{Class numbers of the simplest cubic fields},
Math. Comp. 48, 371--384 (1987).

\bibitem{Ya} P. Yatsyna, \textit{A lower bound for the rank of a universal quadratic form with integer coeficients in a totally real field}, Comment. Math. Helvet. 94, 221--239 (2019).

\end{thebibliography}
\end{document}